\newcommand{\F}{\mathbf{F}}
\newcommand{\Q}{\mathbf{Q}}
\newcommand{\Z}{\mathbf{Z}}
\newcommand{\fp}{\mathfrak{p}}
\newcommand{\fq}{\mathfrak{q}}
\newcommand{\cl}{\operatorname{cl}}
\newcommand{\inj}{\hookrightarrow}
\renewcommand{\lim}{\varprojlim}
\newcounter{spec}
{\end{list}}%
\newtheorem{thm}{Theorem}
\newtheorem{lemme}[thm]{Lemma}
\theoremstyle{definition}
\theoremstyle{remark}
\begin{document}

\title[On the generalised Tate conjecture]{On the generalised Tate conjecture for products of
elliptic curves over finite fields}
\author{Bruno Kahn}
\address{Institut de Math{\'e}matiques de Jussieu\\ UPMC - UFR 929, Ma\-th\'e\-ma\-ti\-ques\\ 4
Place Jussieu\\75005 Paris\\ France}
\email{kahn@math.jussieu.fr}
\date{Jan. 7, 2011}
\begin{abstract} We prove the generalised Tate conjecture for $H^3$ of products of elliptic
curves over finite fields, by slightly modifying the argument of M. Spiess \cite{spiess}
concerning the Tate conjecture. We prove it in full if the elliptic curves run among at most $3$
isogeny classes. We also show how things become more intricate from $H^4$ onwards, for more
that $3$ isogeny classes.
\end{abstract}

\maketitle

Let $\F_q$ be a finite field. It is known that the Tate conjecture for all smooth projective
varieties over $\F_q$ implies the generalised Tate conjecture for all smooth projective
varieties over $\F_q$  (\cite[Rk. 10.3 2)]{qjpam}, \cite[\S 1]{milne-ram}); however, the proofs
in these two references are non-effective. It is therefore of interest to ask if one can prove
the generalised Tate conjecture for certain explicit classes of $\F_q$-varieties. 

In \cite{spiess}, Michael Spiess proved the Tate conjecture for products of elliptic curves
over a finite field: this provides a natural candidate for such a class. In this note, we show
that a slight modification of his argument does yield the generalised Tate conjecture, in
cohomological degree $3$ or if the elliptic curves run over at most $3$ distinct isogeny
classes.

Contrary to \cite{qjpam} and \cite{milne-ram}, the proofs do not appeal to Honda's existence
theorem \cite{honda}. This theorem appears, however, when studying $H^4$ of a well-chosen product of $4$
elliptic curves: this is directly related to the delicate
combinatorics of Weil numbers\footnote{The corresponding computation seems in contradiction with
the one from \cite[Claim p. 130]{kowalski}.}; we illustrate the non-effectiveness of the
arguments from
\cite{qjpam} and \cite{milne-ram} in this case.

\begin{thm}\label{t1} Let $X$ be a product of elliptic curves over $\F_q$. Then the
generalised Tate conjecture holds for $H^3(\bar X,\Q_l)$: the subspace of Tate coniveau $1$
coincides with the first step of the coniveau filtration.
\end{thm}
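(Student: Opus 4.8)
The plan is to compare, summand by summand in the K\"unneth decomposition, the coniveau filtration with a purely Galois-theoretic filtration, and then to run Spiess's argument. Write $\bar X=X\times_{\F_q}\bar{\F}_q$ and use $h(\bar E_i)=\Q_l\oplus h^1(\bar E_i)\oplus\Q_l(-1)$ to write $H^3(\bar X,\Q_l)$ as the direct sum of the ``mixed'' summands $h^2(\bar E_i)\otimes h^1(\bar E_j)$ (with $i\ne j$) and the summands $W_{ijk}:=h^1(\bar E_i)\otimes h^1(\bar E_j)\otimes h^1(\bar E_k)$ (with $i,j,k$ distinct). Let $\tilde N^1H^3(\bar X,\Q_l)$ be the subspace of Tate coniveau $1$, i.e.\ the span of the Frobenius eigenspaces whose eigenvalue $\alpha$ is such that $\alpha/q$ is an algebraic integer. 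Since Frobenius acts semisimply on the cohomology of $\bar X$ (Weil), $\tilde N^1$ is compatible with the K\"unneth decomposition, and the inclusion $N^1H^3\subseteq\tilde N^1H^3$ is immediate from purity and the Gysin map: a class supported in codimension $1$ is, up to the twist $(-1)$, pushed forward from the $H^1$ of a desingularised divisor, hence has Frobenius eigenvalues of the form $q\times(\text{algebraic integer})$. The theorem is thus the assertion $\tilde N^1H^3(\bar X,\Q_l)\subseteq N^1H^3(\bar X,\Q_l)$.

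Coniveau-$1$ classes are produced by the projection formula: if $\tau=\cl(D)$ is the class of a divisor $D\subset\bar X$ and $\eta\in H^1(\bar X,\Q_l)$, then $\tau\cup\eta=i_{D*}i_D^*\eta$ lies in the image of the Gysin map for $\bar D\hookrightarrow\bar X$, hence in $N^1H^3(\bar X,\Q_l)$. By Tate's theorem on homomorphisms of abelian varieties over finite fields, applied over $\F_q$ and over all $\F_{q^m}$, every \emph{geometric Tate class} in $H^2(\bar X,\Q_l)$ — i.e.\ every Frobenius eigenvector with eigenvalue $q\zeta$ for some root of unity $\zeta$ — is algebraic; write $H^2(\bar X,\Q_l)^{\tate}$ for their span. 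Then $H^2(\bar X,\Q_l)^{\tate}\cup H^1(\bar X,\Q_l)\subseteq N^1H^3(\bar X,\Q_l)$, so it is enough to show $\tilde N^1H^3(\bar X,\Q_l)\subseteq H^2(\bar X,\Q_l)^{\tate}\cup H^1(\bar X,\Q_l)$. For a mixed summand this is clear: $h^2(\bar E_i)\otimes h^1(\bar E_j)$ equals $h^2(\bar E_i)\cup h^1(\bar E_j)$ with $h^2(\bar E_i)$ a geometric Tate class, and its Frobenius eigenvalues $q\pi_j,q\bar\pi_j$ are divisible by $q$, so on such a summand $N^1$, $\tilde N^1$ and the right-hand side all coincide. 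The whole matter reduces to the summands $W_{ijk}$.

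Write $\pi_l,\bar\pi_l$ for the Frobenius eigenvalues on $h^1(\bar E_l)$ and $\sigma_l\in\{\pi_l,\bar\pi_l\}$, so the Frobenius eigenvalues on $W_{ijk}$ are the products $\sigma_i\sigma_j\sigma_k$. The crux is the following \emph{Weil-number lemma}: $\sigma_i\sigma_j\sigma_k/q$ is an algebraic integer if and only if $\sigma_l\sigma_{l'}=q\zeta$ for some root of unity $\zeta$ and some pair $l\ne l'$. The ``if'' part is trivial. For ``only if'', one tests $\fp$-adically at the primes above $p$: with $r=v_\fp(q)$, the valuation $v_\fp(\sigma_l)$ is $0$ or $r$ when $E_l$ is ordinary and is $r/2$ when $E_l$ is supersingular; since $\sigma_i\sigma_j\sigma_k/q$ and its complex conjugate (of product $q$) are both algebraic integers, the condition is $r\le v_\fp(\sigma_i\sigma_j\sigma_k)\le 2r$ for every $\fp\mid p$. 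Analysing the ``unit loci'' $U_l=\{\fp:v_\fp(\sigma_l)=0\}$ — which are preimages of primes of the CM field of $E_l$ in the set of primes above $p$ of the compositum, bearing in mind that two ordinary elliptic curves over $\F_q$ with a common CM field are already isogenous over $\bar{\F}_q$ — shows that the displayed inequalities force the unit loci of two of the $\sigma_l$ to be complementary; the corresponding quotient is then a $p$-adic unit of archimedean absolute value $1$, hence a root of unity by Kronecker's theorem, so $\sigma_l\sigma_{l'}=q\zeta$. Granting the lemma, any Frobenius eigenvector $v\in\tilde N^1W_{ijk}$ lies in $\big(h^1(\bar E_l)\otimes h^1(\bar E_{l'})\big)_{q\zeta}\otimes h^1(\bar E_{l''})$ for such a pair, and $\big(h^1(\bar E_l)\otimes h^1(\bar E_{l'})\big)_{q\zeta}$ is a geometric Tate class in $H^2$, so $v\in H^2(\bar X,\Q_l)^{\tate}\cup H^1(\bar X,\Q_l)$. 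Putting the pieces together,
\[ \tilde N^1H^3(\bar X,\Q_l)\ \subseteq\ H^2(\bar X,\Q_l)^{\tate}\cup H^1(\bar X,\Q_l)\ \subseteq\ N^1H^3(\bar X,\Q_l), \]
and with the easy inclusion the two filtrations agree.

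The main obstacle is the Weil-number lemma: one must push the combinatorics of the $p$-adic slopes $0,\tfrac12,1$ through every pattern of ordinary versus supersingular factors and every possibility for the decomposition of $p$ in the compositum of the CM fields. With only three Weil numbers this remains a finite and manageable verification; the analogous statement for four Weil numbers is false, and that failure is precisely the obstruction which, for $H^4$ of a suitable product of four elliptic curves, brings in Honda's existence theorem, as indicated in the introduction.
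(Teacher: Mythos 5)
Your argument is the same one the paper runs, just phrased in the language of valuations rather than ideals: the Künneth decomposition of $H^3$, the reduction to the triple products $h^1\otimes h^1\otimes h^1$, the Weil-number lemma, and the final appeal to Tate/Deuring for divisors on $E_l\times E_{l'}$ all match (the paper's Lemma~\ref{l1} is exactly your ``Weil-number lemma'', stated as $[\alpha_i\alpha_j]=[q]$, which as you note is equivalent to $\alpha_i\alpha_j=q\zeta$ by Kronecker). The one place where your write-up is genuinely thinner than the paper's is the proof of the lemma itself: you assert that ``analysing the unit loci $U_l$ \dots\ shows that the inequalities force two of them to be complementary'' and call this a finite verification, whereas the paper carries that verification out, reducing (via \eqref{eq2}) to the case of three ordinary $\alpha_i$ generating a degree-$8$ multiquadratic field and computing explicitly in $\Z[G]$ with $G\simeq(\Z/2)^3$ that the exponent of $\fp$ in $[\alpha_1\alpha_2\alpha_3]$ (resp.\ $[\alpha_1\alpha_2\alpha_3^c]$) misses the summand $\sigma_1\sigma_2\sigma_3$ (resp.\ $\sigma_1\sigma_2$); your valuation normalisation does have the small advantage of treating the supersingular/$r=1$ sub-case uniformly where the paper needs a separate ramification argument in $K(\sqrt p)$, but the combinatorial core still has to be done and is not done in your sketch.
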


 Let $q=p^r$ for $p$ be a prime number and
$r\ge 1$. As in \cite{spiess}, we write $[\rho]$ for
the ideal generated by an algebraic integer $\rho$. As in \cite[Def. 1]{spiess}, we also say that a Weil $q$-number $\alpha$ is
\emph{elliptic} if it arises from the Frobenius endomorphism of an elliptic curve over $\F_q$.
There are two kinds of elliptic Weil $q$-numbers: the supersingular ones, of the form $\pm
p^{r/2}$  and the ordinary ones, which generate a quadratic extension of
$\Q$ in which $p$ is totally decomposed. In the latter case, if $[p]=\fp_1\fp_2$, then 
\begin{equation}\label{eq2}
[\alpha]=\fp_1^r \text{ or } \fp_2^r
\end{equation}
 (compare \cite[Lemma 2]{spiess}.)

The main lemma is:

\begin{lemme}\label{l1} Let $\alpha_1,\alpha_2,\alpha_3$ be $3$ elliptic Weil $q$-numbers,
generating a multiquadratic number field $K/\Q$. Suppose that 
\[
[\alpha_1\alpha_2\alpha_3] =[q\beta] 
\]
with $\beta$ an algebraic integer. Then there exist $i\ne j$ such that
\[[\alpha_i\alpha_j]=[q].\]
\end{lemme}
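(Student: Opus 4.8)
The plan is to rephrase the statement entirely in terms of the primes of $K$ above $p$ and then run a short case analysis on the number of $\alpha_i$ that are supersingular. Fix an embedding $K\hookrightarrow\C$ and let $c\in G:=\operatorname{Gal}(K/\Q)$ denote the corresponding complex conjugation, a well-defined element since $G$ is elementary abelian. For a prime $\mathfrak P$ of $K$ above $p$ put $e_{\mathfrak P}=v_{\mathfrak P}(p)$, and for an ideal $\mathfrak a$ supported above $p$ set $w_{\mathfrak P}(\mathfrak a)=v_{\mathfrak P}(\mathfrak a)/e_{\mathfrak P}$, so that $w_{\mathfrak P}(q)=r$. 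I would first establish three facts. (a) From $\alpha_i\bar\alpha_i=q$ and $\bar\alpha_i=c(\alpha_i)$ one gets $w_{\mathfrak P}([\alpha_i])+w_{c\mathfrak P}([\alpha_i])=r$ for every $\mathfrak P\mid p$. (b) If $\alpha_i$ is supersingular then $[\alpha_i]^2=[q]$ (indeed $p$ is inert or ramified in $\Q(\alpha_i)$), hence $w_{\mathfrak P}([\alpha_i])=r/2$ for all $\mathfrak P\mid p$. (c) If $\alpha_i$ is ordinary then $p$ is unramified in $F_i:=\Q(\alpha_i)$ and, by \eqref{eq2}, $[\alpha_i]$ is the $r$-th power of one of the two primes of $F_i$ over $p$; therefore $w_{\mathfrak P}([\alpha_i])\in\{0,r\}$, the set $U_i:=\{\mathfrak P\mid p:w_{\mathfrak P}([\alpha_i])=r\}$ is a single $\operatorname{Gal}(K/F_i)$-orbit, and by (a) $c$ carries $U_i$ bijectively onto its complement, so $U_i$ contains exactly half of the primes of $K$ above $p$. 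In this language the hypothesis $[\alpha_1\alpha_2\alpha_3]=[q\beta]$ reads $\sum_i w_{\mathfrak P}([\alpha_i])\ge r$ for all $\mathfrak P\mid p$, while the sought conclusion $[\alpha_i\alpha_j]=[q]$ reads $w_{\mathfrak P}([\alpha_i])+w_{\mathfrak P}([\alpha_j])=r$ for all $\mathfrak P\mid p$, i.e.\ $U_j$ is the complement of $U_i$.

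With this in place the non-vacuous cases should fall quickly. If two of the $\alpha_i$, say $\alpha_1$ and $\alpha_2$, are supersingular, then (b) gives the conclusion at once for $(i,j)=(1,2)$. If exactly one, say $\alpha_1$, is supersingular, then (b) and the hypothesis force $w_{\mathfrak P}([\alpha_2])+w_{\mathfrak P}([\alpha_3])\ge r/2>0$ for every $\mathfrak P\mid p$; since these values lie in $\{0,r\}$ this means $U_2\cup U_3$ is the whole set of primes above $p$, and since $\#U_2=\#U_3$ equals half of that set, $U_3$ must be the complement of $U_2$, so $(2,3)$ works. If all three $\alpha_i$ are ordinary with $F_1,F_2,F_3$ imaginary quadratic and two of them coincide, say $F_1=F_2$, then $U_1$ and $U_2$ are each one of the two cosets of $\operatorname{Gal}(K/F_1)$ among the primes above $p$ (here one uses that $F_1$ is imaginary, so $c\notin\operatorname{Gal}(K/F_1)$, whence the complement of $U_1$ is again such a coset); if $U_1$ and $U_2$ are complementary, take $(1,2)$, while if $U_1=U_2$ the hypothesis forces $w_{\mathfrak P}([\alpha_3])=r$ on their common complement, so the size count again gives that $U_3$ is the complement of $U_1$ and $(1,3)$ works.

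The remaining configuration, $F_1,F_2,F_3$ pairwise distinct imaginary quadratic, I expect to handle by showing the hypothesis is then unsatisfiable, so the statement holds vacuously. A biquadratic field that is not totally real contains a unique real quadratic subfield, hence only two imaginary quadratic subfields; so three distinct imaginary quadratic fields cannot share a common biquadratic overfield, which forces $[K:\Q]=[F_1F_2F_3:\Q]=8$. As $p$ splits in each $F_i$, the decomposition group of any prime of $K$ above $p$ lies in $\bigcap_i\operatorname{Gal}(K/F_i)=\{1\}$, so $p$ splits completely and $G\cong(\Z/2)^3$ acts simply transitively on the eight primes above $p$; then each $U_i$ is a coset of the subgroup $\operatorname{Gal}(K/F_i)$ (of index $2$), and since the three corresponding quadratic characters of $G$ are independent there is a prime $\mathfrak P\mid p$ lying in none of $U_1,U_2,U_3$, at which $\sum_i w_{\mathfrak P}([\alpha_i])=0<r$, contradicting the hypothesis.

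I expect the main obstacles to be two. First, pinning down facts (a)--(c) cleanly — in particular the behaviour of $[\alpha_i]$ for supersingular $\alpha_i$ (including the case $r$ odd) and the assertion that $c$ exchanges $U_i$ with its complement; this is where the elliptic structure of the Weil numbers enters, via \eqref{eq2} and the functional equation $\alpha\bar\alpha=q$. Second, recognizing that the configuration of three distinct imaginary quadratic fields is empty rather than trying to exhibit a pair in it. Once these are understood, the uniform counting argument that $\#U_i$ is half the number of primes above $p$ settles every remaining case.
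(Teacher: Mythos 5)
Your proof is correct, and it reaches the conclusion by a genuinely different mechanism than the paper's. Both proofs share the same outer skeleton (case analysis on the number of supersingular $\alpha_i$, and then on whether the quadratic fields coincide), but the engine is different. The paper's Case~2 is a computation in the group ring $\Z[G]$: writing $[\alpha_1\alpha_2\alpha_3]=\fp^{rm}$ with $m\in\Z[G]$ explicit, and observing that the term $\sigma_1\sigma_2\sigma_3$ is missing from $m$ so that $[p]=\fp^{\sum_{\sigma}\sigma}$ cannot divide it; and the paper's Case~1 needs an ad hoc detour when $r=1$, passing to $K(\sqrt p)$ and using that $p$ is totally ramified there. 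Your reformulation in terms of the support sets $U_i$ and the ramification-normalised valuations $w_\fP$ avoids both. Fact~(b) puts supersingular eigenvalues on the same footing as ordinary ones ($w_\fP\equiv r/2$), which eliminates the $r=1$ special pleading entirely, and the argument in your Case~3/4 reduces to counting: $\#U_i$ is exactly half the primes, and $[\alpha_i\alpha_j]=[q]$ is just the statement that $U_j$ is the complement of $U_i$. Your observation in the distinct-fields case --- that by independence of the three quadratic characters there is a prime lying in none of $U_1,U_2,U_3$ --- is precisely the paper's ``the summand $\sigma_1\sigma_2\sigma_3$ is missing'', only said multiplicatively; and your counting argument dispenses with the paper's need to separately treat $[\alpha_1\alpha_2\alpha_3^c]$, since it never fixes a normalisation of the $\alpha_i$.

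One small quibble on the justification, not the substance: in fact~(b), the parenthetical ``$p$ is inert or ramified in $\Q(\alpha_i)$'' is the right classical fact but is doing slightly delicate work (it rules out the possibility that the two conjugate prime factors of $[\alpha_i]$ have unequal exponents). The cleanest route to $[\alpha_i]^2=[q]$ is via the Newton polygon: supersingularity means $v(\alpha_i)=\tfrac r2\,v(q)$ for every $p$-adic valuation $v$, which is exactly $w_\fP([\alpha_i])=r/2$ for all $\fP$ and is all you use downstream. Either way the fact holds and the rest of the proof is sound.
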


\begin{proof} If two of the $\alpha_i$ are supersingular the assertion is obvious. Thus we may
assume that at least two of the $\alpha_i$ are ordinary. 

{\bf Case 1:} one of the $\alpha_i$, say $\alpha_3$, is supersingular. If
$[\alpha_1\alpha_2]\ne [q]$, one sees that $[\alpha_1\alpha_2]$ is not divisible by $[p]$.
(Using \eqref{eq2} as in \cite[proof of Lemma 3]{spiess},  either $\alpha_1$ and $\alpha_2$
generate the same quadratic field and then $[\alpha_1]=[\alpha_2]$, or $\alpha_1$ and
$\alpha_2$ generate a biquadratic extension $K/\Q$ in which $[p]=\fq_1\fq_2\fq_3\fq_4$ and then
without loss of generality, $[\alpha_1]=(\fq_1\fq_2)^r$ and $[\alpha_2]=(\fq_1\fq_3)^r$.) If
$r> 1$, we get a contradiction. If $r=1$, we have the equation
$[\alpha_1\alpha_2]=[\sqrt{p}\beta]$ in $K(\sqrt{p})$. Since $p$ is totally ramified in
$\Q(\sqrt{p})$, the prime divisors of $[p]$ in $K$ are totally ramified in $K(\sqrt{p})$ and we
get a new contradiction.

{\bf Case 2:} all the $\alpha_i$ are ordinary. We assume again that the conclusion of the
lemma is violated, and show that $[\alpha_1\alpha_2\alpha_3]$ is then not divisible by $[p]$. 

If (say) $\alpha_1$ and $\alpha_2$ generate the same quadratic field, then as seen in Case 1,
$[\alpha_1]=[\alpha_2]$ and $[\alpha_1\alpha_2\alpha_3]$ is not divisible by $[p]$. Suppose now
that  the $\alpha_i$ generate three distinct imaginary quadratic fields. In particular,
$[K:\Q]\ge 4$. If $[K:\Q]=4$, then $K=\Q(\alpha_1,\alpha_2)$ (say) and $\alpha_1$, $\alpha_2$
generate two distinct quadratic subextensions of $K$. Then $\alpha_3$ must generate the third
quadratic subextension: but this is impossible because the latter is real. Thus  $[K:\Q]=8$. 

We now set up some notation. Let $G=Gal(K/\Q)\simeq (\Z/2)^3$, and let $X(G)$ be the character
group of $G$. The quadratic subextensions generated by $\alpha_1,\alpha_2,\alpha_3$ correspond
to characters $\chi_1,\chi_2,\chi_3$ forming a basis of $X(G)$. Let
$(\sigma_1,\sigma_2,\sigma_3)$ be the dual basis of $G$. Finally, let $c\in G$ be the complex
conjugation: since $\chi_i(c)=1$ for all $i$, we find that $c=\sigma_1\sigma_2\sigma_3$. Note
that, since the $\alpha_i$ are Weil $q$-numbers, we have $\alpha_i\alpha_i^c=q$.

Since $p$ is totally decomposed in all $\Q(\alpha_i)$, it is totally decomposed in $K$. Pick a
prime divisor $\fp$ of $[p]$. We then have
\[[p]=\fp^{\sum_{\sigma\in G} \sigma}.\]

Since $\alpha_1$ is invariant under $\sigma_2$ and $\sigma_3$, we find from \eqref{eq2}, up to
changing $\alpha_1$ to $\alpha_1^c$:
\[[\alpha_1]=\fp^{r(1+\sigma_2)(1+\sigma_3)}\]
and similarly:
\[[\alpha_2]=\fp^{r(1+\sigma_1)(1+\sigma_3)},\quad
[\alpha_3]=\fp^{r(1+\sigma_1)(1+\sigma_2)}.\]

We now compute: $[\alpha_1\alpha_2\alpha_3]=\fp^{rm}$, with
\begin{multline*}
m=(1+\sigma_2)(1+\sigma_3)+(1+\sigma_1)(1+\sigma_3)+(1+\sigma_1)(1+\sigma_2)\\
=3+2(\sigma_1+\sigma_2+\sigma_3)+\sigma_2\sigma_3+\sigma_1\sigma_3+\sigma_1\sigma_2.
\end{multline*}

This shows that $\fp^{rm}$ is not divisible by $[p]$ (the summand $\sigma_1\sigma_2\sigma_3$
is missing). Similarly, $[\alpha_1\alpha_2\alpha_3^c]=\fp^{rm'}$ with
\begin{multline*}
m'=(1+\sigma_2)(1+\sigma_3)+(1+\sigma_1)(1+\sigma_3)+c(1+\sigma_1)(1+\sigma_2)\\
=2+\sigma_1+\sigma_2+3\sigma_3+2\sigma_1\sigma_3+2\sigma_2\sigma_3+\sigma_1\sigma_2\sigma_3
\end{multline*}
and $\fp^{rm'}$ is not divisible by $[p]$ (the summand $\sigma_1\sigma_2$ is missing). The
other possible products reduce to those by permutation of the $\alpha_i$ and conjugation by
$c$: the proof is complete.
\end{proof}

\begin{proof}[Proof of Theorem \ref{t1}]  
 It is sufficient to prove the equality after tensoring with a large enough number field $K$,
Galois over $\Q$. We first observe that the Frobenius action on $H^*(\bar X):=H^*(\bar
X,\Q_l)\otimes K$ is semi-simple since $X$ is an abelian variety (compare \cite[Lemma
1.9]{cell}). Let $v$ be an eigenvector of Frobenius, with eigenvalue $\rho$. Since $H^3(\bar
X)=\Lambda^3H^1(\bar X)$ and $X$ is a product of elliptic curves,  $v$ is a sum of vectors of the form $v_1\wedge v_2\wedge v_3$ where $v_i\in H^1(\bar
X)$ is an eigenvector with  Frobenius eigenvalue  $\alpha_i$ with $\alpha_1\alpha_2\alpha_3=\rho$, $\alpha_i$ corresponds to an elliptic curve
$E_i$ and $v_i$ comes from $H^1(\bar E_i)\inj H^1(\bar X)$.

Suppose $\rho$ is divisible by $q$. Without loss of generality, we may assume that $v$ is a single vector $v_1\wedge v_2\wedge v_3$. By Lemma \ref{l1}, up to renumbering we have
$[\alpha_1\alpha_2]=[q]$. As in \cite[Corollary p. 288]{spiess}, there is an integer $N\ge 1$
such that
$(\alpha_1\alpha_2)^N=q^N$. 

 By the Tate conjecture in codimension $1$ for $E_1\times E_2$ (Deuring, cf. Tate
\cite{Tate1}), $v_1\wedge v_2\otimes \Q_l(1)\in H^2(\bar E_1\times \bar E_2)(1)$ is of the form
$\cl(\gamma)$ where $\gamma$ is a cycle of codimension $1$ on $\bar E_1\times \bar E_2$ and
$\cl$ is the cycle class map. Hence $v\otimes \Q_l(1)=\cl(\pi^* \gamma)\cdot v_3$, with
$\pi:X\to E_1\times E_2$ the projection. 
\end{proof}

\begin{thm} \label{t2} Let $X$ be a product of elliptic curves, belonging to at most $3$
distinct isogeny classes. Then the generalised Tate conjecture holds for $X$ in all degrees and all coniveaux.
\end{thm}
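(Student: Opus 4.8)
The plan is to reduce the generalised Tate conjecture for $X$ to the statement that whenever a Frobenius eigenvalue $\rho$ on $H^n(\bar X,\Q_l)\otimes K$ is divisible by $q^j$ in the ring of algebraic integers, the corresponding eigenvector lies in the image of the cycle class map in coniveau $j$. Since $X=\prod_i E_i$ with the $E_i$ falling into at most $3$ isogeny classes, after tensoring with a large enough Galois number field $K$ we may argue with a single wedge $v=v_1\wedge\dots\wedge v_n$, $v_i\in H^1(\bar E_{k(i)})$ an eigenvector with elliptic Weil number $\alpha_i$, $\rho=\prod\alpha_i$; semisimplicity of Frobenius on the abelian variety $X$ (as in the proof of Theorem~\ref{t1}) lets us decompose into such eigenvectors. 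The key combinatorial input is that, because only $3$ isogeny classes occur, the multiset $\{\alpha_1,\dots,\alpha_n\}$ consists of elements (and their complex conjugates) drawn from at most $3$ quadratic fields, so Lemma~\ref{l1} applies repeatedly: I would show that $q^j\mid\rho$ forces, after renumbering, $j$ disjoint pairs among the $\alpha_i$ each with $[\alpha_{i}\alpha_{i'}]=[q]$.

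More precisely, the heart is an inductive application of Lemma~\ref{l1}. Grouping the $\alpha_i$ by the isogeny class of the underlying curve, one sees that if $q^j$ divides $\prod_{i=1}^n\alpha_i$ with $j\ge 1$, then writing $\prod\alpha_i=q\beta$ one can always find three of the factors (or fewer, in the supersingular case) whose product is divisible by $q$ after discarding the obviously-paired ones; Lemma~\ref{l1} then extracts a pair $(\alpha_i,\alpha_{i'})$ with $[\alpha_i\alpha_{i'}]=[q]$, hence $(\alpha_i\alpha_{i'})^N=q^N$ for some $N\ge 1$ as in \cite[Corollary p.~288]{spiess}. Removing this pair drops the exponent of $q$ in the remaining product by exactly $1$; by induction on $j$ we obtain $j$ disjoint such pairs, say $(\alpha_{2a-1},\alpha_{2a})$ for $a=1,\dots,j$. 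The point where the hypothesis ``at most $3$ isogeny classes'' is genuinely used is exactly here: with $4$ or more classes the combinatorics of Weil numbers can produce a product divisible by $q$ with \emph{no} sub-pair divisible by $q$ (this is the phenomenon the introduction flags via Honda's theorem for $H^4$), so the inductive extraction fails.

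Once the $j$ pairs are found, the geometric conclusion is as in the proof of Theorem~\ref{t1}. For each pair, $v_{2a-1}\wedge v_{2a}\otimes\Q_l(1)\in H^2(\bar E_{k(2a-1)}\times\bar E_{k(2a)})(1)$ is an algebraic class by the Tate conjecture in codimension $1$ for a product of two elliptic curves (Deuring; cf. Tate \cite{Tate1}), pulled back along the relevant projection $\pi\colon X\to E_{k(2a-1)}\times E_{k(2a)}$. Taking the product of these $j$ algebraic classes with the remaining $v_i$'s exhibits $v\otimes\Q_l(j)$ as $\cl(\gamma)\cdot w$ for a codimension-$j$ cycle $\gamma$ on $\bar X$, so $v$ lies in the $j$-th step of the coniveau filtration on $H^n(\bar X,\Q_l)$. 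This is precisely the containment needed for the generalised Tate conjecture (the reverse containment, coniveau $\subseteq$ Tate coniveau, being standard), completing the proof for all $n$ and all coniveaux.

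The main obstacle I expect is organising the induction cleanly: one must make sure that extracting a pair $(\alpha_i,\alpha_{i'})$ with $[\alpha_i\alpha_{i'}]=[q]$ really does decrease the $q$-divisibility of the residual product by exactly one and leaves a residual product that is still (a) a product of elliptic Weil numbers from $\le 3$ quadratic fields and (b) divisible by $q^{j-1}$, so that Lemma~\ref{l1} can be reapplied. Verifying (a)–(b) amounts to a careful bookkeeping of prime-ideal exponents using \eqref{eq2}, of the same flavour as the computation inside the proof of Lemma~\ref{l1}; it is routine but needs the three-fields restriction to guarantee that at each stage one can still isolate a suitable triple to feed into the lemma.
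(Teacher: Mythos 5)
Your top‐level strategy matches the paper's: decompose into Frobenius eigenvectors $v_1\wedge\cdots\wedge v_n$, argue that $q^j\mid\rho$ forces $j$ disjoint pairs with $[\alpha_i\alpha_{i'}]=[q]$, and then invoke the codimension-$1$ Tate conjecture for the relevant $E\times E'$ factors exactly as in the proof of Theorem~\ref{t1}. The reduction to cohomology and the geometric endgame are both right.

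However, the combinatorial heart of your argument --- ``one can always find three of the factors whose product is divisible by $q$ after discarding the obviously-paired ones; Lemma~\ref{l1} then extracts a pair'' --- does not work as stated, and this is where the genuine content of the theorem lives. After you discard the obvious pairs $\alpha_k\alpha_k^c=q$ and normalise within each of the (at most) three isogeny classes, what remains is a product of the form $\alpha_1^{n_1}\alpha_2^{n_2}\alpha_3^{n_3}$ or $\alpha_1^{n_1}\alpha_2^{n_2}(\alpha_3^c)^{n_3}$ with the $\alpha_i$ in general position. The correct claim is that such a residual product is \emph{not} divisible by $[p]$ at all; there is no further $q$-divisible triple to hand to Lemma~\ref{l1}, so your inductive extraction has nothing to bite on. Applying Lemma~\ref{l1} to a triple drawn from this residue merely confirms that its product is not of the form $q\beta$ --- it does not produce a new pair. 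What is actually needed is the direct generalisation of the $\Z[G]$-exponent computation from Case~2 of Lemma~\ref{l1}: with $[\alpha_1]=\fp^{r(1+\sigma_2)(1+\sigma_3)}$ etc., one checks that the exponent of $\fp$ in $\alpha_1^{n_1}\alpha_2^{n_2}\alpha_3^{n_3}$ has zero coefficient on $\sigma_1\sigma_2\sigma_3$ (and zero coefficient on $\sigma_1\sigma_2$ in the conjugated variant), so $[p]\nmid[\alpha_1^{n_1}\alpha_2^{n_2}\alpha_3^{n_3}]$ for all $n_i\ge 0$. This is exactly the one-line observation the paper makes, and it is what forces every factor of $q$ in $\rho$ to come from an obvious pair. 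You gesture at this (``careful bookkeeping of prime-ideal exponents using \eqref{eq2}'') but present the argument as an induction on $j$ via repeated use of Lemma~\ref{l1}, which as formulated is circular: the induction step is precisely the generalised computation you have not carried out. Replace the inductive extraction with the direct computation and the proof closes.
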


The proof is a variant of the one above:  in the proof of Lemma \ref{l1}, Case 2, the
computation showing that $[\alpha_1\alpha_2\alpha_3]$ and $[\alpha_1\alpha_2\alpha_3^c]$ are
not divisible by $[p]$ extends to show that $[\alpha_1^{n_1}\alpha_2^{n_2}\alpha_3^{n_3}]$ and
$[\alpha_1^{n_1}\alpha_2^{n_2}(\alpha_3^c)^{n_3}]$ are not divisible by $[p]$ for any
nonnegative  integers $n_1,n_2,n_3$. This generalises Lemma \ref{l1} to any product of Weil
$q$-numbers involved in the cohomology of $X$. \qed

\bigskip

Finally, we show what problems arise when one tries to replace $3$ by $4$ in Theorem \ref{t1}
or \ref{t2}. Start again with three non isogenous ordinary elliptic curves $E_1,E_2,E_3$, with
Weil numbers $\alpha_1,\alpha_2,\alpha_3$. We retain the notation from Case 2 in the proof of
Lemma \ref{l1}. Apart from $\chi_1,\chi_2$ and $\chi_3$,
\[\chi_1\chi_2\chi_3\]
is the unique character which does not vanish on $c$. In the corresponding quadratic subfield
of $K$, there is the possibility of a new Weil $q$-number $\alpha_4$ with
\[[\alpha_4] = \fp^{r(1+\sigma_1\sigma_2)(1+\sigma_1\sigma_3)}.\]

This can actually be achieved provided $r$ is large enough. Since the class group $Cl(O_K)$ is
finite, we may choose $r$ such that $\fp^r$ is principal, say $\fp^r=[\lambda]$. Then
$N_{K/\Q}(\lambda)=q$ (since $K$ is totally imaginary) and we choose
$\alpha_4=\lambda^{(1+\sigma_1\sigma_2)(1+\sigma_1\sigma_3)}$.

Up to increasing $r$, we may assume that the similar formulas hold for $\alpha_1,\alpha_2$ and
$\alpha_3$.

By Honda's theorem \cite{honda}, $\alpha_4$ corresponds to a 4th (isogeny class of) elliptic curve $E_4$. Now 
$\alpha_1\alpha_2\alpha_3\alpha_4^c= \lambda^{m''}$ with
\begin{multline*}
m''= m+ c(1+\sigma_1\sigma_2)(1+\sigma_1\sigma_3)\\
= 3+2(\sigma_1+\sigma_2+\sigma_3)+\sigma_2\sigma_3+\sigma_1\sigma_3+\sigma_1\sigma_2\\ + \sigma_1\sigma_2\sigma_3(1+\sigma_1\sigma_2+\sigma_1\sigma_3+\sigma_2\sigma_3)\\
= N+2(1+\sigma_1+\sigma_2+\sigma_3)
\end{multline*}
with $N=\sum_{\sigma\in G} \sigma$. Thus $\alpha_1\alpha_2\alpha_3\alpha_4^c = q\beta^2$, with
\[\beta=\lambda^{(1+\sigma_1+\sigma_2+\sigma_3)}.\]

This $\beta$ is a new Weil $q$-number; it generates $K$ since the isotropy group of $[\beta]$
in $G$ is trivial. By the Honda-Tate theorem, it corresponds to the isogeny class of a simple
$\F_q$-abelian variety $A$ of dimension $4$ (see \cite[p. 142 formula (7)]{Tate1}).

Let us say that a Weil $q$-number $\gamma$ is \emph{ordinary} if $\gcd(\gamma,\gamma^c)=1$.
This is equivalent to requiring that $\gcd(p,\gamma+\gamma^c)=1$, hence, by \cite[Prop.
7.1]{waterhouse}, that the corresponding abelian variety be ordinary. Let $\gamma\in K$ be an
ordinary Weil $q$-number. Since $\gamma\gamma^c=q$, the divisor of $\gamma$ is of the form
$\fp^{rm_\gamma}$, where $m_\gamma\in \Z[G]$ is the sum of elements in a section of the
projection $G\to G/\langle c\rangle$. These sections form a torsor under the group of maps from
$G/\langle c\rangle$ to $\langle c\rangle$, so there are 16 of them. Up to conjugation by $c$,
we get $8$. Among these $8$, $4$ are given by the kernels of the characters
$\chi_1,\chi_2,\chi_3$ and $\chi_1\chi_2\chi_3$, recovering $\alpha_1,\alpha_2,\alpha_3$ and
$\alpha_4$. Among the $4$ remaining ones, there is the one defining $\beta$; since the isotropy
group of $[\beta]$ is trivial, the other ones are conjugate to it. We have exhausted the
ordinary Weil $q$-numbers contained in $K$.

Let $X=\prod_{i=1}^4  E_i$. If we run the technique of proof of \cite{qjpam} or
\cite{milne-ram} to try and prove the generalised Tate conjecture for $N^1H^4(\bar X)$, we end
up with a Tate cycle in $H^6(\bar X\times \bar A)(3)$. This Tate cycle is exotic in the sense
that it is not a linear combination of products of Tate cycles of degree $2$ (cf. \cite[p.
136]{milne}), because the relation 
\[\alpha_1\alpha_2\alpha_3\alpha_4^c (\beta^2)^c= q^3\]
cannot be reduced to relations of degree $2$. I have no idea if the Tate conjecture can be
proven for $X\times A$. Can the methods of \cite{milne} be used to answer this question? 

\subsection*{Acknowledgements} Most of this work was done during a stay at IMPA (Rio de
Janeiro) in November 2010, in the framework of the France-Brazil cooperation. I thank the first
for its hospitality and the second for its support.

\end{document}